\theoremstyle{plain}
\newtheorem{theorem}{Theorem}
\newtheorem*{thmc}{Theorem C}
\newtheorem{lemma}{Lemma}
\theoremstyle{remark}
\newtheorem{problem}{Problem}
\theoremstyle{definition}
\renewcommand{\d}{\rho}
\DeclareMathOperator{\interior}{int}
\DeclareMathOperator{\Per}{Per}
\DeclareMathOperator{\Trans}{Trans}
\DeclareMathOperator{\Intrn}{Intrn}
\title{On mixing and dense periodicity on spaces with a free arc}
\author{Dominik Kwietniak, Filip Wierzbowski}
\date{}
\dedicatory{Dedicated to Ľubomir Snoha on the occasion of his 70th birthday.}
\address[DK]{Jagiellonian University in Kraków, Faculty of Mathematics and Computer Science,  ul. Łojasiewicza 6, 30-348 Kraków, Poland}
\email{dominik.kwietniak@uj.edu.pl}
\address[FW]{Jagiellonian University in Kraków, Faculty of Mathematics and Computer Science, ul. Łojasiewicza 6, 30-348 Kraków, Poland \and Jagiellonian University, Doctoral School of Exact and Natural Sciences, ul. Łojasiewicza 11, 30-348 Kraków, Poland   
}
\email{filip.wierzbowski@doctoral.uj.edu.pl}
\begin{document}

\begin{abstract} We study the dynamics of continuous maps on compact metric spaces containing a free interval (an open subset homeomorphic to the interval $(0,1)$). 
We provide a new proof of a result of M.~Dirbák, Ľ.~Snoha, V.~Špitalský 
[Ergodic Theory Dynam. Systems, vol. \textbf{33} (2013), no. 6, pp. 1786--1812] stating that every continuous and transitive, but non-minimal map of a space with a free interval is relatively mixing, non-invertible, has positive topological entropy, and dense periodic points. 
The key simplification comes from short proofs of two facts. The first states that every weakly mixing map of a space with a free interval must be mixing and have positive entropy. The second states that a transitive but not minimal map of a space with a free interval has dense periodic points and is non-invertible.
\end{abstract}

\maketitle

\section{Introduction} 
The topology of compact one-dimensional spaces like the interval $[0,1]$ or the unit circle influences the dynamics of self-maps of these spaces. As a result, there are theorems in one-dimensional dynamics that are not true for general dynamical systems. Important examples are Sharkovsky's Theorem or results describing transitive maps on these spaces, see  \cite{ALM,Ruette}. A natural way of generalising the theory of iterations of interval maps is to extend these results first from interval maps to circle maps, then to self-maps of topological trees and graphs, see \cite{ALM,Blokh-I,Blokh-II,Blokh-III,Ruette}. It is then natural to consider maps of dendrites as the next step, see \cite{HM,KKM,SST,Spitalsky}, but many results known for the interval are false on dendrites, see \cite{BFK,HM,KOT,Spitalsky}. 

Spaces with a free interval (that is, compact metric spaces containing an open subset homeomorphic with the open interval $(0,1)\subseteq\mathbb{R}$) form an alternative class of spaces for extending the theory of one-dimensional dynamical systems. These spaces are only somewhere one-dimensional. 
A paper of M.~Dirbák, Ľ.~Snoha, V.~Špitalský \cite{DSS} and its continuation \cite{DHMSS}, present a systematic study of dynamical properties of self-maps of spaces with a free interval. Earlier results concerning the dynamics of such spaces are contained in \cite{AKLS,HKO-JDEA,Kawamura}, see the discussion in \cite{DSS}, cf. the more recent paper \cite{Mihokova} and T.~Drwięga's PhD thesis \cite{drwiega_phd}. These papers show that many features of one-dimensional dynamics actually depend on the presence of an interval-like component in the phase space rather than its global structure. Furthermore, there are uncountably many pairwise non-homeomorphic spaces with a free interval (for example, this family contains all dendrites whose branch points are not dense, in particular, all dendrites with a closed set of endpoints or all compactifications of a ray $[0,\infty)$; see \cite{Awartani,dlVM}). An easy example of the space that is covered by these results is the Warsaw circle.

In particular, \cite[Theorem C]{DSS} establishes the following dichotomy for transitive maps on a space with a free interval (we state the theorem below, making explicit some conditions that were implicit in the original formulation, and provide definitions of all terms in the forthcoming sections):
\begin{thmc}
If $f\colon X\to X$ is a continuous transitive map on a space with a free interval, then
\begin{enumerate}
    \item if $f$ is  non-minimal, then $f$ is relatively strongly mixing, non-invertible, has positive topological entropy and dense periodic points;
    \item if $f$ is minimal, then $f$ is invertible, it cyclically permutes a disjoint union of $k\ge 1$ circles and $f^k$ restricted to each of these circles is topologically conjugate to an irrational rotation of the unit circle.
\end{enumerate}
\end{thmc}
Note that 
exactly one of the two statements holds. The second point is a direct consequence of \cite[Theorem A]{DSS}.

The main aim of the present paper is to provide a short and almost self-contained proof of \cite[Theorem C(1)]{DSS}, which we restate as Theorem \ref{thm:DSS-C(1)} below.  
We call this proof \emph{almost self-contained} because we still need two results from the general theory of topological dynamical systems. One is a variant of a result by Kinoshita that guarantees the existence of a dense set of non-transitive points in a non-minimal transitive system (see Theorem \ref{thm:intrans}, \cite[Fact 2]{DY}, \cite[Theorem 4.3.1]{KS}, or  \cite{Kinoshita} for the original proof); the second is a result describing the structure of transitive but not totally transitive systems using the notion of a regular periodic decomposition (RPD); see Lemma \ref{lem:terminal_RPD} and \cite{Banks-ETDS}. We note that these results, together with \cite[Theorem 3.2]{HKO-JDEA} are also used in the original proof of \cite[Theorem C(1)]{DSS}. 
Our proof of \cite[Theorem C(1)]{DSS} is based on two observations that are relatively easy to prove (Theorems \ref{thm:wm-mix-ent} and \ref{thm:trans-min=dp}). We show in Theorem \ref{thm:wm-mix-ent} that every weakly mixing map of a space with a free interval must be mixing and have positive entropy. Theorem \ref{thm:wm-mix-ent} generalises \cite[Theorem 3.2]{HKO-JDEA}. Theorem \ref{thm:trans-min=dp} states that a transitive but not minimal map of a space with a free interval has dense periodic points and is noninvertible. The proof of the latter combines ideas from \cite[Theorem 1.1]{AKLS} and \cite{Silverman}.

Finally, note that although Theorem \ref{thm:wm-mix-ent} is a direct corollary of \cite[Theorem C(1)]{DSS}, it was proved independently by the first named author of the present paper around the same time that the results of \cite{DSS} were being written (this is acknowledged in \cite{DSS}, see reference [Kw11] therein). We also note that the original proof of \cite[Theorem C(1)]{DSS} uses \cite[Theorem 3.2]{HKO-JDEA} and that a reference to Theorem \ref{thm:wm-mix-ent} may replace the reference to \cite[Theorem 3.2]{HKO-JDEA}. Therefore, our proof of \cite[Theorem C(1)]{DSS} seems to be more direct than the original one. It also yields \cite[Theorem 3.2]{HKO-JDEA} as an easy corollary, see Theorem \ref{thm:tt-mix}.

\section{Terminology and notation}

Let $(X,\d)$ be a compact metric space. A \emph{map} (of $X$) means here a \emph{continuous} map (from $X$ to itself). Given $n\ge 0$, we write $f^n$ for the $n$-th iterate of the map $f$ with the convention that $f^0=\text{id}_X$. The \emph{orbit} of a point $x$ is the set $\mathcal{O}_f(x)=\{f^n(x):n\ge 0\}$. A \emph{transitive point} is a point $x\in X$ whose orbit is dense in $X$. We call a map $f\colon X\to X$ \emph{transitive} if for every two nonempty open sets $U, V\subseteq X$ there is $k\ge 0$ such that $f^k(U) \cap V \neq \emptyset$. We write $\Trans(f)$ for the set of all transitive points and $\Intrn(f)$ for its complement $X\setminus\Trans(f)$. 
In fact, $f$ is transitive if and only if $\Trans(f)$ is a dense $G_\delta$-set,  
see \cite[Theorem 2.2.2]{KS}. Note that if $f$ is transitive, then the orbit $\mathcal{O}_f(x)$ of an intransitive point $x\in\Intrn(f)$ is nowhere dense. Indeed, if $U=\interior \overline{\mathcal{O}_f(x)} \neq \emptyset$, then $U$ would contain a transitive point $y$ but then the orbit of $y$ would never intersect the nonempty open set $X \setminus \overline{\mathcal{O}_f(x)}$, which gives a contradiction. A map $f$ is \emph{totally transitive} if for every $n\ge 1$ the map   $f^n$ is transitive. We say that $f\colon X\to X$ is \emph{weakly mixing} if the product map $f \times f\colon X\times X \to X \times X$ is transitive. By a result of Furstenberg \cite[Proposition II.3]{Fursetberg} weak mixing is equivalent to the fact that for every $m\ge 1$ the $m$-fold product map $f^{\times m}$ on the $m$-fold Cartesian product
\[
X^m=\underbrace{X\times\ldots\times X}_{\text{$m$ times}}
\]
is transitive (see also \cite{Banks-DCDS}).
Finally, $f$ is \emph{mixing} if for every nonempty open sets $U, V\subseteq X$ there exists $N \ge 0$ such that $f^n(U) \cap V \neq \emptyset$ for every $n \geqslant N$. For a map $f\colon X\to X$ we have the following implications: 
\[\text{$f$ mixing} \Longrightarrow \text{$f$ weakly mixing}\Longrightarrow \text{$f$ totally transitive}\Longrightarrow \text{$f$ transitive}.\]
Only the middle implication is a non-trivial one (see e.g. \cite[Theorem 2.7]{Ruette} for the proof). None of the implications above can be reversed.

A set $I \subseteq X$ is an \emph{arc} if $I$ is homeomorphic to the unit interval  $[0,1]\subseteq \mathbb{R}$. In this paper we will follow the terminology used in \cite{DSS}. 

An open subset $J \subseteq X$ is a \emph{free interval} if $J$ is homeomorphic to the open interval $(0,1)\subseteq\mathbb{R}$. If, in addition, $X$ is connected and for each $x \in J$ the set $X \setminus \{x\}$ has exactly two connected components, then we say that $J$ is a \emph{disconnecting interval}. The latter definition comes from \cite{AKLS}. In \cite[Appendix B]{DSS} relations between several ‘natural’ definitions of a 
disconnecting interval are discussed.

An arc $I$ is a \emph{free arc} (in $X$) if there is a homeomorphism $\varphi\colon [0,1] \to I$ such that the set $\varphi((0,1))$ is a free interval, that is, it is open in $X$. In fact, if $I$ is a free arc in $X$, then for every homeomorphism $\psi\colon [0,1] \to I$ we have that the set $\psi((0,1))$ is open in $X$. (Let $\varphi$ be a homeomorphism from the definition of a free arc (closed interval). Then
$\Psi=\psi^{-1}\circ \varphi\colon [0,1]\to [0,1]$ is a homeomorphism, so $\Psi((0,1))=(0,1)$, hence $\varphi((0,1))=\psi((0,1))$.)

Note that our terminology differs from \cite{HKO-JDEA}: what we call a free interval is called a free arc in \cite{HKO-JDEA}, while our free arc corresponds to their closed interval.

The closure of a free interval in $X$ need not be a free arc. For example, there is a free interval which is an open dense subset of the topologist sine curve ($\sin\frac1x$ continuum). But a free interval always contains a free arc; therefore, a space $X$ has a free arc if and only if it has a free interval. 

Given two free arcs $I, J \subseteq X$ and a map $f\colon X\to X$ we say that $I$ \emph{$f$-covers} $J$ if there exists a free arc $K \subseteq I$ such that $f(K) = J$. Below, we recall basic properties of the $f$-covering relation. 
These results are based on the following elementary observation.

\begin{lemma}\label{contSurj}
            If $g\colon [0,1] \to [0,1]$ is a continuous surjection, then for any choice of $0 \leqslant c < d \leqslant 1$ there exist $0 \leqslant a < b \leqslant 1$ such that $g([a,b]) = [c,d]$.
\end{lemma}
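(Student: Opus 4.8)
The plan is to reduce the problem to finding a subinterval on which $g$ sends its endpoints to $c$ and $d$ and never leaves the band $[c,d]$; the intermediate value theorem will then force the image to be exactly $[c,d]$. Concretely, I would first use surjectivity to pick points $p,q\in[0,1]$ with $g(p)=c$ and $g(q)=d$. Since $c\neq d$ these are distinct, and after possibly relabelling I may assume $p<q$ (the case $q<p$ is entirely symmetric, with the roles of $c$ and $d$ interchanged).

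The heart of the argument is to trim $[p,q]$ down to a subinterval $[a,b]$ on which $g$ is squeezed between $c$ and $d$. First set $a=\sup\{x\in[p,q]:g(x)=c\}$. Continuity gives $g(a)=c$, and $a<q$ because $g(q)=d\neq c$. Since the interval $(a,q]$ contains no preimage of $c$ and $g$ is continuous there, $g-c$ keeps a constant sign on $(a,q]$; evaluating at $q$ shows $g>c$ on $(a,q]$, hence $g\geqslant c$ on all of $[a,q]$. Next set $b=\inf\{x\in[a,q]:g(x)=d\}$. Then $g(b)=d$ and $b>a$ since $g(a)=c\neq d$, and by the same sign argument applied to $g-d$ on $[a,b)$ (using $g(a)=c<d$) we get $g<d$ on $[a,b)$, hence $g\leqslant d$ on $[a,b]$.

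Combining these, on $[a,b]$ we have $c\leqslant g\leqslant d$, so that $g([a,b])\subseteq[c,d]$; and since $g(a)=c$ and $g(b)=d$, the intermediate value theorem yields $[c,d]\subseteq g([a,b])$. Therefore $g([a,b])=[c,d]$ with $0\leqslant a<b\leqslant 1$, as required.

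I expect the main (though still mild) obstacle to be the verification that $g$ does not overshoot the band $[c,d]$ on $[a,b]$, that is, ruling out a priori that $g$ takes values above $d$ or below $c$ somewhere inside $[p,q]$. This is exactly what the two extremal choices --- the \emph{last} preimage of $c$ before $q$ and the \emph{first} preimage of $d$ after it --- are designed to control, via the elementary observation that a continuous function which avoids a value on an interval cannot change sign relative to that value.
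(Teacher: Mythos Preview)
Your proof is correct and follows essentially the same route as the paper: pick preimages of $c$ and $d$, take the \emph{last} preimage of one value and then the \emph{first} preimage of the other on the resulting subinterval, and use the intermediate value theorem both to show $[c,d]\subseteq g([a,b])$ and (via the constant-sign observation, which is the paper's contradiction argument rephrased) to rule out overshoot. The only cosmetic difference is that the paper phrases the no-overshoot step as a direct contradiction rather than as a sign argument.
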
        
       
   \begin{proof}
            Since $g$ is a surjection, we find $x_0, x_1 \in [0,1]$ such that $g(x_0) = c$ and $g(x_1) = d$. Without loss of generality $x_0 < x_1$. Take
            \[y_0 := \sup \{x \in [x_0, x_1]: g(x) = c\}.\]
            The point $y_0$ is well-defined as the supremum of a nonempty set bounded from above.
            Because $g$ is continuous, we have $g(y_0)=c$, so $y_0< x_1$. Now take
            \[y_1 := \inf \{x \in [y_0, x_1]: g(x) = d\}.\]
            By continuity of $g$, we have $g(y_1)=d$, so $y_0 < y_1$. Furthermore, $[c,d] = [g(y_0), g(y_1)] \subseteq g([y_0,y_1])$ by the intermediate value theorem. 

            If there existed $x$ such that $y_0<x<y_1$ and $g(x) < c$, then using again the intermediate value property we would obtain $y \in (x,y_1)$ such that $g(y) = c$ and $y_0<y$, which contradicts the choice of $y_0$. By a similar argument there is no  $x$ satisfying $y_0<x<y_1$ and $g(x) > d$. Hence $g([y_0,y_1]) = [c,d]$.
        \end{proof}
    
The next result, Lemma \ref{f-cover} is adapted from \cite[p. 590]{AdRR}. The result is well-known for graph maps, but since we will apply it in a more general setting, we provide a proof.
\begin{lemma}\label{f-cover}
    Let $X$ be a compact metric space, $I, J, K, L \subseteq X$ be free arcs in $X$, and let $f, g\colon X \to X$ be maps. Then
    \begin{enumerate}[label = (\roman*)]
        \item \label{f-cover-i} If $I$ $f$-covers $I$, then there exists $x \in I$ such that $f(x) = x$;
        \item \label{f-cover-ii} If $I \subseteq K$, $L \subseteq J$ and $I$ $f$-covers $J$, then $K$ $f$-covers $L$;
        \item \label{f-cover-iii} If $I$ $f$-covers $J$ and $J$ $g$-covers $K$, then $I$ $(g\circ f)$-covers $K$;
        \item \label{f-cover-iv} If $\psi\colon [0,1]\to J$ is a 
        homeomorphism and $0\le a<b\le c<d\le 1$ are such that for some
        $0\le t_0\le a$, $b\le t_1\le c$, and  $d\le t_2\le 1$ we have $\psi(t_i)\in f(I)$ for $i=0,1,2$, then $I$ $f$-covers $\psi([t_0,t_1])$ or $\psi([t_1,t_2])$ (possibly both);
        
        \item \label{f-cover-v} If $J \subseteq f(I)$ and $K_1, K_2 \subseteq J$ are two free arcs such that $K_1 \cap K_2$ is at most one point, then $I$ $f$-covers $K_1$ or $I$ $f$-covers $K_2$.
    \end{enumerate}
\end{lemma}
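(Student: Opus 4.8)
The plan is to prove the five items in the order in which they are stated, since part (iii) will use part (ii), and part (v) will use both parts (ii) and (iv). Throughout I would fix homeomorphisms $\phi\colon[0,1]\to I$ and $\psi\colon[0,1]\to J$ and transport everything to the unit interval, where Lemma~\ref{contSurj} and the intermediate value theorem apply. The one topological fact that makes the free-arc setting behave like the interval, and which I would record first, is this: if $J^{\circ}=\psi((0,1))$ denotes the free interval of $J$, then $J=\psi([0,1])$ is compact, hence closed, so $\overline{J^{\circ}}=J$; consequently the only points of $\overline{J^{\circ}}$ lying outside $J^{\circ}$ are the two endpoints $\psi(0),\psi(1)$.

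For part (i), I write $[a,b]=\phi^{-1}(K)$ for the free arc $K\subseteq I$ with $f(K)=I$ and set $h=\phi^{-1}\circ f\circ\phi\colon[a,b]\to[0,1]$. Since $f(K)=I$, the map $h$ is a continuous surjection onto $[0,1]\supseteq[a,b]$; choosing $p,q\in[a,b]$ with $h(p)=a$ and $h(q)=b$ gives $h(p)\le p$ and $h(q)\ge q$, so $u\mapsto h(u)-u$ vanishes somewhere, producing a fixed point of $f$ in $K\subseteq I$. For part (ii), let $M\subseteq I$ be the free arc with $f(M)=J$ and fix a homeomorphism $\phi_M\colon[0,1]\to M$; then $\psi^{-1}\circ f\circ\phi_M$ is a continuous surjection of $[0,1]$, so Lemma~\ref{contSurj} supplies a subinterval whose image is $\psi^{-1}(L)$. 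Its $\phi_M$-image is a free arc contained in $M\subseteq I\subseteq K$ and carried by $f$ onto $L$, so $K$ $f$-covers $L$. Part (iii) is then immediate: if $B\subseteq J$ witnesses that $J$ $g$-covers $K$, part (ii) (applied with the pair $I\subseteq I$ and $B\subseteq J$) shows that $I$ $f$-covers $B$, i.e. some free arc $C\subseteq I$ has $f(C)=B$, whence $(g\circ f)(C)=g(B)=K$.

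The crux is part (iv), whose difficulty is that $f$ need not map $I$ into $J$, so one cannot simply conjugate $f|_I$ to an interval map. I put $\gamma=f\circ\phi\colon[0,1]\to X$ and choose $s_0,s_1,s_2\in[0,1]$ with $\gamma(s_i)=\psi(t_i)$ (these exist because $\psi(t_i)\in f(I)$); note $t_0<t_1<t_2$ and $t_1\in(0,1)$, since $0\le a<b\le t_1\le c<d\le 1$. Let $(\ell,r)$ be the connected component of the open set $\gamma^{-1}(J^{\circ})$ containing $s_1$ and let $\Phi=\psi^{-1}\circ\gamma$, continuous on $(\ell,r)$ with $\Phi(s_1)=t_1$. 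Because $\overline{J^{\circ}}=J$, the limits of $\gamma$ at the endpoints of $(\ell,r)$ lie in $J$, so $\Phi$ extends continuously to $[\ell,r]$ with $\gamma=\psi\circ\Phi$ there; moreover, at any endpoint of $(\ell,r)$ that is interior to $[0,1]$ the value of $\gamma$ must be $\psi(0)$ or $\psi(1)$, so $\Phi$ equals $0$ or $1$ at such a point. The key claim is that $\Phi$ attains some value $\le t_0$ or some value $\ge t_2$ on $[\ell,r]$: otherwise $\Phi([\ell,r])\subseteq(t_0,t_2)\subseteq(0,1)$, which excludes the endpoint values $0,1$ and hence forces $(\ell,r)=(0,1)$; but then $s_0\in[\ell,r]$ and $\Phi(s_0)=t_0$, contradicting $\Phi([\ell,r])\subseteq(t_0,t_2)$. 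Assuming by symmetry that $\Phi$ attains a value $\le t_0$, the intermediate value theorem together with the argument of Lemma~\ref{contSurj} yields a subinterval $[p,q]\subseteq[\ell,r]$ with $\Phi([p,q])=[t_0,t_1]$; then $\phi([p,q])$ is a free arc in $I$ and $f(\phi([p,q]))=\psi([t_0,t_1])$, so $I$ $f$-covers $\psi([t_0,t_1])$ (and in the symmetric case $\psi([t_1,t_2])$).

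Finally, part (v) follows by combining parts (iv) and (ii). Writing $K_1=\psi([a_1,b_1])$ and $K_2=\psi([a_2,b_2])$ and using that $K_1\cap K_2$ is at most one point, I order the parameters so that $a_1<b_1\le a_2<b_2$. Since $J\subseteq f(I)$, every $\psi(t)$ with $t\in[0,1]$ lies in $f(I)$, so I apply part (iv) with $t_0=a_1$, any $t_1\in[b_1,a_2]$, and $t_2=b_2$ (taking $(a,b,c,d)=(a_1,b_1,a_2,b_2)$) to conclude that $I$ $f$-covers $\psi([a_1,t_1])\supseteq K_1$ or $\psi([t_1,b_2])\supseteq K_2$; shrinking the covered arc by part (ii) then gives that $I$ $f$-covers $K_1$ or $K_2$. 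I expect part (iv) to be the only genuinely delicate point, and within it the bookkeeping of which of the two subarcs is produced, which the claim about the endpoint values of $\Phi$ settles cleanly.
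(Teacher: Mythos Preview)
Your proof is correct and follows essentially the same approach as the paper. Parts (i)--(iii) and (v) are handled identically; for (iv), both you and the paper isolate the maximal subinterval of $[0,1]$ around $s_1$ on which $f\circ\phi$ stays in $J$, use maximality to force an endpoint value in $\{0,1\}$, and then invoke the argument of Lemma~\ref{contSurj}---the only cosmetic difference is that the paper builds this interval as the union $C=\bigcup\{[s_0',s_2']:\ s_1\in[s_0',s_2'],\ f(\phi([s_0',s_2']))\subseteq J\}$, whereas you take the connected component of $\gamma^{-1}(J^{\circ})$ containing $s_1$ and pass to its closure.
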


\begin{proof}
We begin with a proof of \ref{f-cover-i}. Let $\varphi\colon [0,1] \to I$ be a homeomorphism such that $\varphi((0,1))$ is open in $X$. Since $I$ $f$-covers $I$, there exists $J \subseteq I$ such that $f(J) = I$. Take $a, b \in [0,1]$ such that $J = \varphi([a,b])$. Then $g := \varphi^{-1}\circ f\circ \varphi\colon [a,b] \to [0,1]$ is a continuous surjection. 
        If $g(x) < x$ for all $x \in [a,b]$, then $1 \notin g([a,b]) = [0,1]$, which is a contradiction; similar reasoning shows that it is not possible that $g(x) > x$ for all $x\in[a,b]$. Thus there are $x_1, x_2 \in [a,b]$ such that $g(x_1) - x_1 \ge 0$ and $g(x_2) - x_2 \le 0$. From the intermediate value property, there exists $x_0 \in [x_1,x_2] \subseteq [a,b]$ such that $g(x_0) = x_0$. Then $f(\varphi(x_0)) = \varphi(x_0)$.

Next, we prove \ref{f-cover-ii}. Let $\varphi\colon[0,1]\to K$, 
$\psi\colon [0,1]\to J$ be homeomorphisms. Let $I'\subseteq I\subseteq K$ be a free arc such that $f(I')=J$. Take $0\le a<b\le 1$ and $0\le c<d\le 1$ such that $\varphi([a,b])=I'$ and $\psi([c,d])=L$. Consider $g\colon [a,b]\to [0,1]$ given by
$g=\psi^{-1}\circ f \circ\varphi|_{[a,b]}$. It follows from our assumption that $g$ is a continuous surjection. Applying Lemma \ref{contSurj} we get the desired conclusion. 

Note that \ref{f-cover-iii} is an immediate consequence of \ref{f-cover-ii}. 
        
For a proof of \ref{f-cover-iv} take a homeomorphism $\varphi\colon [0,1] \to I$. By assumption for each $i\in\{0,1,2\}$ there exists $s_i \in [0,1]$ such that $\psi(t_i) = f(\varphi(s_i))$. We define 
        $$C = \bigcup\{[s_0',s_2'] \subseteq [0,1]: s_1 \in [s_0',s_2'],\ f(\varphi([s_0',s_2'])) \subseteq J\}.$$
Recall that $U=\interior J$ is a neighbourhood of $\psi(t)$ for every $0<t<1$. In particular, $U$ is a neighbourhood of $\psi(t_1)$. Therefore, by continuity of $f\circ\varphi$, the set $(f\circ\varphi)^{-1}(U)$ is an open neighbourhood of every $s\in[0,1]$ such that $f\circ\varphi(s)\in U$. It follows that the set $C$ is nonempty, closed and has nonempty interior.   
        Notice also that $C$ is connected as a union of connected sets with nonempty intersection. Therefore, $C = [s_0',s_2']$ for some $s_0' < s_2'$ and $s_0' \leqslant s_1 \leqslant s_2'$. By Lemma \ref{contSurj} it is now enough to show that there is $i \in \{0,1\}$ such that $[t_i,t_{i+1}] \subseteq g(C)$ where $g := \psi^{-1}\circ f\circ \varphi|_C$. If $C=[0,1]$, then we are done. Otherwise, if $s_0'>0$, then $f(\varphi(s_0'))\in \{\psi(0),\psi(1)\}$, by maximality of $C$. Thus $\varphi([t_i,t_{i+1}]) \subseteq f(\varphi(C))$ for some $i \in \{0,1\}$. An analogous reasoning applies when $s_1'<1$.         
        Using Lemma \ref{contSurj} we obtain that $I$ $f$-covers $\psi([t_i,t_{i+1}])$ for some $i \in \{0,1\}$.

        To finish the proof, note that \ref{f-cover-v} follows from \ref{f-cover-iv}. 
\end{proof}

We refrain from recalling the definition of topological entropy, we refer to \cite{ALM}. We will just need the following easy criterion for positivity of topological entropy, see \cite{ALM, Ruette}.
\begin{lemma}[Horseshoe lemma] If  $K_1, K_2 \subseteq J$ are two disjoint free arcs 
such that $K_1\cup K_2\subseteq f(K_1)\cap f(K_2)$, 
    then $f$ is non-invertible and the topological entropy $h(f)$ of $f$ is positive.
\end{lemma}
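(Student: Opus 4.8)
The plan is to show that $f$, restricted to the set of points whose entire forward orbit stays in $K_1\cup K_2$, carries a copy of the full two-shift, whose entropy is $\log 2$. Concretely, put $Y=K_1\cup K_2$ and let $\Lambda=\bigcap_{k\ge 0}f^{-k}(Y)$ be the (closed, $f$-invariant) set of points $x$ with $f^k(x)\in Y$ for all $k\ge 0$. Since $K_1$ and $K_2$ are disjoint free arcs, they are disjoint compact sets, so $2\alpha:=\d(K_1,K_2)>0$ and every $x\in\Lambda$ has a well-defined \emph{itinerary} $\pi(x)=(a_k)_{k\ge0}\in\{1,2\}^{\mathbb N}$, where $a_k$ is the unique index with $f^k(x)\in K_{a_k}$. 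The map $\pi$ is continuous, maps onto the cylinders, and intertwines $f|_\Lambda$ with the shift $\sigma$; the whole point is to prove that $\pi$ is surjective.

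Establishing surjectivity is where the hypothesis $K_1\cup K_2\subseteq f(K_1)\cap f(K_2)$ is fully used, and I expect this to be the main (though not hard) step. Fix a word $w=(w_0,\dots,w_{n-1})\in\{1,2\}^n$ and build nonempty compact sets by backward recursion: set $E_{n-1}=K_{w_{n-1}}$ and $E_k=K_{w_k}\cap f^{-1}(E_{k+1})$ for $k=n-2,\dots,0$. The key observation, proved by downward induction, is that $f(E_k)=E_{k+1}$ and $E_k\neq\emptyset$: indeed $E_{k+1}\subseteq K_{w_{k+1}}\subseteq K_1\cup K_2\subseteq f(K_{w_k})$, hence $f(E_k)=f(K_{w_k})\cap E_{k+1}=E_{k+1}\neq\emptyset$. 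Any $x_w\in E_0$ then satisfies $f^k(x_w)\in E_k\subseteq K_{w_k}$ for $0\le k\le n-1$, so every finite itinerary is realised; intersecting the nested sets obtained from the initial segments of a prescribed infinite sequence realises every infinite itinerary as well, so $\pi$ is onto.

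With $\pi$ surjective, $(\{1,2\}^{\mathbb N},\sigma)$ is a factor of the subsystem $(\Lambda,f|_\Lambda)$, whence $h(f)\ge h(f|_\Lambda)\ge h(\sigma)=\log 2>0$. If one prefers to avoid quoting the behaviour of entropy under factors, the sets $E_0=E_0(w)$ give a direct estimate: choosing one point $x_w\in E_0(w)$ for each $w\in\{1,2\}^n$, any two distinct words first differ at some index $k<n$, and then $f^k(x_w)$ and $f^k(x_{w'})$ lie in different arcs, so $\d\!\left(f^k(x_w),f^k(x_{w'})\right)\ge 2\alpha>\alpha$. Thus the $2^n$ points $\{x_w\}$ are $(n,\alpha)$-separated, and $h(f)\ge\limsup_n\frac1n\log 2^n=\log 2$. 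Note that neither argument uses the $f$-covering calculus of Lemma \ref{f-cover}: the free-arc hypothesis enters only through the disjointness, hence positive distance, of $K_1$ and $K_2$, and the construction in fact works for any two disjoint compact sets satisfying the covering inclusion.
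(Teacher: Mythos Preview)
Your proof is correct. The identity $f\bigl(A\cap f^{-1}(B)\bigr)=f(A)\cap B$ that underlies the inductive step $f(E_k)=f(K_{w_k})\cap E_{k+1}=E_{k+1}$ is valid for arbitrary maps and sets, so the nonemptiness of the $E_k$ follows as you claim; the nested-intersection argument (or equivalently the $(n,\alpha)$-separated set count) then gives $h(f)\ge\log 2$. Your closing remark is also accurate: nothing in the argument uses the arc structure of $K_1,K_2$ beyond their being disjoint compacta.

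As for the comparison: the paper does not supply its own proof of the Horseshoe Lemma. It is stated as a known criterion and referenced to \cite{ALM,Ruette}. What you have written is precisely the standard itinerary/semiconjugacy argument one finds in those references, so there is no methodological divergence to discuss.
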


\section{When does weakly mixing imply mixing?}
We recall \cite[Lemma 3.1]{HKO-JDEA} and provide a proof for completeness.
\begin{lemma}\label{lem:trans-mix}
    If $f\colon X \to X$ is a transitive map and $x \in X$ is a transitive point such that for each neighbourhood $W$ of $x$ we can find an $N$ with $f^n(W) \cap W \neq \emptyset$ for all $n \geqslant N$, then $f$ is mixing.
\end{lemma}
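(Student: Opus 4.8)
The plan is to verify the definition of mixing directly, using the transitive point $x$ as a hub through which orbits of points near $x$ can be routed first into $U$ and then into $V$. So fix nonempty open sets $U, V \subseteq X$. Since $x$ is a transitive point, its orbit is dense, so there exist $k, l \ge 0$ with $f^k(x) \in U$ and $f^l(x) \in V$. Because $f^k$ and $f^l$ are continuous and $U, V$ are open, the preimages $(f^k)^{-1}(U)$ and $(f^l)^{-1}(V)$ are open neighbourhoods of $x$; taking their intersection I obtain a single open neighbourhood $W$ of $x$ small enough that $f^k(W) \subseteq U$ and $f^l(W) \subseteq V$ simultaneously.

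Next I would invoke the hypothesis on $x$: for this particular $W$ there is an $N$ such that $f^n(W) \cap W \neq \emptyset$ for every $n \ge N$. Fix such an $n$ and choose $w \in W$ with $f^n(w) \in W$. Then $f^k(w) \in f^k(W) \subseteq U$, while $f^{n+l}(w) = f^l\bigl(f^n(w)\bigr) \in f^l(W) \subseteq V$. Setting $m = n + l - k$, we have $f^m\bigl(f^k(w)\bigr) = f^{n+l}(w) \in V$ together with $f^k(w) \in U$, so the point $f^{n+l}(w)$ witnesses $f^m(U) \cap V \neq \emptyset$.

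To conclude, I would let $n$ range over all integers $\ge N$, so that the exponent $m = n + l - k$ ranges over all integers $\ge N + l - k$. This yields $f^m(U) \cap V \neq \emptyset$ for every $m \ge \max(N + l - k, 0)$, which is exactly the definition of mixing for the pair $(U, V)$; since $U, V$ were arbitrary, $f$ is mixing.

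The argument is in essence a bookkeeping of time shifts, so I do not expect a serious obstacle. The one point requiring care is the final step: I must make sure that the single self-return condition $f^n(W) \cap W \neq \emptyset$, valid for \emph{all} large $n$, transfers without gaps to the two-set condition $f^m(U) \cap V \neq \emptyset$ for all large $m$. This works precisely because $m = n + l - k$ is a fixed translation of the index, so consecutive values of $n$ produce consecutive values of $m$; the only mild subtlety is discarding the finitely many indices giving $m < 0$, which is harmless for the definition of mixing.
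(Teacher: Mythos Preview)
Your proof is correct and follows essentially the same route as the paper: both arguments shrink a neighbourhood of the transitive point $x$ so that forward images land in $U$ and $V$, then translate the cofinite self-return times of $W$ into cofinite hitting times from $U$ to $V$. The only cosmetic difference is that the paper first uses transitivity of $f$ to pick $l$ with $U\cap f^{-l}(V)\neq\emptyset$ and then sends $x$ into this intersection, which makes the index shift $m=n+l$ automatically nonnegative and spares the final remark about discarding finitely many negative $m$.
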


\begin{proof} Fix nonempty open sets $U, V\subseteq X$.
    Let $l\ge 0$ be such that $W'=U\cap f^{-l}(V)\neq\emptyset$. Since $W'$ is open and $x$ is a transitive point, there is $k\ge 0$ such that
    \[W = f^{-k}(W') =f^{-k}(U \cap f^{-l}(V))\] 
    is a neighbourhood of $x$. Take $N \ge 0$ such that $f^{n}(W) \cap W \neq \emptyset$ for $n \geqslant N$. Hence, for  $n\ge N$ and $y\in W\cap f^{-n}(W)$ we have 
    $f^k(y),f^{n+k}(y)\in W'$, in particular $f^k(y)\in U$ and $f^{n+k+l}(y)\in V$. %
    Therefore, $f^m(U) \cap V\neq\emptyset$ for $m \geqslant N+l$. 
\end{proof}

\begin{theorem}\label{thm:wm-mix-ent}
    If $X$ has a free interval, then every weakly mixing map of $X$ is mixing, non-invertible, and has positive topological entropy.
\end{theorem}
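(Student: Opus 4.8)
The plan is to exploit the hypothesis that $f$ is weakly mixing in the form recalled in the text: every finite Cartesian power $f^{\times k}$ is transitive, so for any finite family of pairs of nonempty open sets $(U_i,V_i)_{i=1}^{k}$ there is a \emph{single} $m$ with $f^m(U_i)\cap V_i\neq\emptyset$ for all $i$ at once. Fix a free arc $J\subseteq X$ with a homeomorphism $\psi\colon[0,1]\to J$ such that $\psi((0,1))$ is open in $X$, and fix three reference points $\psi(0),\psi(\mu),\psi(1)$ with $0<\mu<1$, writing $L:=\psi([0,\mu])$ and $R:=\psi([\mu,1])$. The basic tool I would build from this is: given finitely many pairwise disjoint free subarcs of $J$, the simultaneous return time $m$ forces the $f^m$-image of each chosen subarc to meet prescribed small neighbourhoods of $\psi(0)$, $\psi(\mu)$ and $\psi(1)$, and then Lemma~\ref{f-cover}\ref{f-cover-iv} yields that each such subarc $f^m$-covers $L$ or $R$. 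This reduces both halves of the theorem to bookkeeping over which of the two parts each subarc covers.

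For positive topological entropy I would aim at a horseshoe for an iterate. Place two disjoint subarcs $A,B\subseteq L$ and two disjoint subarcs $A',B'\subseteq R$, and produce a common $m$ as above, so that each of the four arcs $f^m$-covers $L$ or $R$. If both $A$ and $B$ cover $L$, then, since $A,B\subseteq L$ and $f^m(A),f^m(B)\supseteq L\supseteq A\cup B$, the Horseshoe lemma applies to $f^m$ immediately; symmetrically if $A',B'$ both cover $R$. In the remaining case I would use Lemma~\ref{f-cover}\ref{f-cover-ii} to turn the ``covers a part'' statements into covering relations between the arcs themselves (an arc in $L$ that covers $R$ covers every subarc of $R$, and vice versa) and then compose them via Lemma~\ref{f-cover}\ref{f-cover-iii}; a short case check shows that one always obtains two disjoint subarcs lying in a common part, each of which $f^{2m}$-covers both, i.e.\ a horseshoe for $f^{2m}$. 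Either way the Horseshoe lemma gives $h(f^m)>0$ or $h(f^{2m})>0$, and hence $h(f)>0$.

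For mixing I would verify the hypothesis of Lemma~\ref{lem:trans-mix} at a transitive point $x\in J$. Shrinking a neighbourhood only makes the condition $f^n(W)\cap W\neq\emptyset$ harder to satisfy, so it suffices to treat a basis of small free arcs $W\ni x$, and for each I would prove the stronger statement that $W$ $f^n$-covers $W$ for all large $n$. The engine is the composition law Lemma~\ref{f-cover}\ref{f-cover-iii}: self-coverings of $W$ at times $p$ and $q$ compose to self-coverings at every $ap+bq$ with $a,b\ge 0$, so once I arrange two self-covering times with $\gcd(p,q)=1$ (for instance two consecutive times---plausible because all powers $f^{\times k}$ are again weakly mixing and the relevant sets of admissible common return times are thick, hence contain consecutive integers), the numerical-semigroup fact gives $W$ $f^n$-covering $W$ for all sufficiently large $n$. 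Producing these two self-covering times for the given $W$ is where the part-covering alternative from the first paragraph, together with a transport step (covering $W$ onto a horseshoe arc and back, using transitivity), would be combined.

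The main obstacle throughout is the \emph{folding} phenomenon peculiar to free intervals as opposed to disconnecting intervals: the continuous image of a subarc can leave $J$ and re-enter through the rest of $X$, so the mere fact that $f^m$ of a subarc meets neighbourhoods of both endpoints of $J$ does \emph{not} force it to cover $J$---Lemma~\ref{f-cover}\ref{f-cover-iv} delivers only one of the two parts $L,R$, never a priori both. This is exactly the point at which the argument for a disconnecting interval in \cite{HKO-JDEA} is simpler. I therefore expect the genuine work to be the ``which part?'' bookkeeping: converting the weak-mixing freedom to prescribe many image-intersections at a common time into the guarantee that, after composing coverings, two disjoint subarcs of a common part each cover that part (for entropy), and that a fixed small arc self-covers at two times of coprime difference (for mixing).
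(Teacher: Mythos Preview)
Your strategy is the paper's: exploit transitivity of high Cartesian powers $f^{\times k}$ to force many simultaneous image--intersections inside a free arc $J$, convert these via Lemma~\ref{f-cover}\ref{f-cover-iv} into $f^m$-coverings, build a horseshoe for entropy, and for mixing obtain self-covering of a fixed subarc at two consecutive times and invoke the numerical-semigroup fact together with Lemma~\ref{lem:trans-mix}. The entropy half of your plan does go through with the coarse two-piece split $L,R$ (the ``short case check'' is a genuine but routine four-way split), though the paper avoids casework altogether by taking four disjoint arcs $A,B,C,D$ separated by five open gaps $I_0,\ldots,I_4$: hitting all five gaps forces each arc to $f^k$-cover at least three of the four, and the horseshoe falls out directly.

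For mixing, however, your two-piece split leaves a real gap. With only the $L/R$ alternative, knowing that at each time $n$ in a thick set your arc $K_0\subseteq L$ $f^n$-covers $L$ \emph{or} $R$ does not yield two coprime self-covering times: the ``covers $L$'' branch returns to $K_0$ at time $n$, but the ``covers $R$'' branch needs an extra composition $R\to K_0$ of some length $\ell$, giving self-covering at $n+\ell$ instead. Nothing in your setup rules out the pattern $L,R,L,R,\ldots$ along a thick block (this is exactly the ``bipartite'' subcase of your own case check), and then the self-covering times you produce may all lie in a single residue class mod $2$, so the semigroup step fails. The paper's four-arc partition is engineered precisely to remove this branch-dependent delay: since each of $A,B,C,D$ covers three of the four, the extremal arcs $A$ and $D$ share two common targets $K_0,K_1$, so \emph{both} $A$ and $D$ $f^k$-cover $K_0$. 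Then, once weak mixing (via $f^{\times 10}$) gives $K_0$ $f^n$-covering and $f^{n+1}$-covering \emph{some} arc in $\{A,D\}$, the return to $K_0$ takes the same extra $k$ steps either way, producing self-covering at the consecutive times $n+k$ and $n+k+1$. To close your argument you should refine the partition so that the two possible outputs of the Lemma~\ref{f-cover}\ref{f-cover-iv} alternative both admit a return to $K_0$ in the \emph{same} number of steps.
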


\begin{proof} 
    Let $J$ be a free interval in $X$ and let $\varphi\colon (0,9)\to J$ be a homeomorphism.   Define   
    \begin{gather*}
        A = \varphi([1,2]),\quad B = \varphi([3,4]),\quad C = \varphi([5,6]),\quad D = \varphi([7,8]),\\ I_j = \varphi((2j,2j+1))\ \text{for}\ 0 \leqslant j \leqslant 4,\\
I^* = I_0\times I_1\times \ldots \times I_4\subseteq X^5.
    \end{gather*}
Since $g = f^{\times 20}\colon X^{20} \to X^{20}$ is transitive (recall that, $f^{\times 20}$ stands for $20$-fold Cartesian product of $f$) and $A,B,C,D$ have nonempty interiors, we can find $k \geqslant 0$ such that
\[g^k(A^5 \times B^5 \times C^5\times D^5) \cap I^* \times I^* \times I^* \times I^*\neq \emptyset.\]    
We conclude from Lemma \ref{f-cover}\ref{f-cover-iv} that each of the arcs $A, B,C, D$ has to $f^k$-cover at least three distinct arcs among $A, B, C, D$. In particular, there are $K_0,K_1 \in \{A, B, C,D\}$ with $K_0\cap K_1=\emptyset$ such that $A$, respectively $D$,  $f^k$-covers $K_0$ and $K_1$. 

Now, note that $K_i$ (for $i=0,1$) also has to $f^k$-cover at least one arc in $\{A,D\}$. That is, for $i=0,1$ there is $E_i\in\{A,D\}$ such that $K_i$ $f^k$-covers $E_i$ and $E_i$ $f^k$-covers every interval $L\in\{K_0,K_1\}$. 
It follows from Lemma \ref{f-cover}\ref{f-cover-iii} that for $i=0,1$ the interval $K_i$ $f^{2k}$-covers every interval $L\in\{K_0,K_1\}$, in particular we have 
    \[
    K_0\cup K_1\subseteq f^{2k}(K_0)\cap f^{2k}(K_1).
    \]
Hence $f^{2k}$ has a two horseshoe, so it is non-invertible and the entropy is positive. 

To prove mixing, we need to modify this reasoning. We use transitivity of $h\times h$, where $h = f^{\times 5}\colon X^5 \to X^5$ to find $n \geqslant 0$ such that
    \[(h\times h)^n(K_0^5\times K_0^5) \cap \left(I^* \times h^{-1}(I^*)\right)\neq \emptyset.\]
    In other words, $f^j(K_0) \cap I_i \neq \emptyset$ for $j = n, n+1$ and $i = 0,1, \ldots, 4$. It follows from Lemma \ref{f-cover}\ref{f-cover-iv} that there are arcs $E, E' \in \{A,D\}$ such that $K_0$ $f^n$-covers $E$ and $f^{n+1}$-covers $E'$. But, as we have seen above, $K_0$ is $f^k$-covered by both $A$ and $D$. Therefore by Lemma \ref{f-cover}\ref{f-cover-iii} $K_0$ $f^{n+k}$-covers and $f^{n+k+1}$-covers itself. Applying Lemma \ref{f-cover}\ref{f-cover-iii} again, we see that $K_0$ $f^m$-covers $K_0$ for every integer $m$ in the cofinite set 
    \[\{\alpha\cdot (n+k) + \beta\cdot (n+k+1): \alpha, \beta \in \mathbb{N}\}.\]
    Now let $x\in \Trans(f)$ belong to a free arc. 
    Fix an open neighbourhood $W$ of $x$. Without loss of generality, we assume  that $W=J$ is a free interval in the above reasoning. Since we know from the above that the set
    $\{n\ge 0: f^n(W)\cap W\neq \emptyset\}$  is cofinite, we conclude that $f$ is mixing by Lemma \ref{lem:trans-mix}.
\end{proof}

We can now provide a simple proof of \cite[Theorem 3.2]{HKO-JDEA}. 
\begin{theorem}\label{thm:tt-mix}
    If $X$ is a compact and connected metric space with a disconnecting interval, then each totally transitive map of $X$ is mixing.
\end{theorem}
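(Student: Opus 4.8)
The plan is to deduce the statement from Theorem \ref{thm:wm-mix-ent}. Since a disconnecting interval is in particular a free interval, that theorem already tells us that weak mixing implies mixing on $X$; hence it suffices to prove that a totally transitive map $f$ of a connected space $X$ with a disconnecting interval is weakly mixing. It is worth stressing at the outset why the disconnecting hypothesis cannot be dropped: the irrational rotation of the circle is totally transitive but not weakly mixing (it is equicontinuous), and although the circle carries a free interval it has no disconnecting interval, because deleting a point leaves a single arc, hence one component rather than two. So the entire content of the proof is the implication \emph{totally transitive} $\Rightarrow$ \emph{weakly mixing}, and the disconnecting interval is exactly what rules out this rotation-type obstruction.

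To produce weak mixing I would argue with free arcs via the covering calculus of Lemma \ref{f-cover}. Fix a cut point $c\in J$, so that $X\setminus\{c\}$ splits into two components, and a short free arc $K\subseteq J$ with $c$ in its interior. Consider the set $S(K)=\{n\ge 1: K \text{ } f^n\text{-covers } K\}$. By Lemma \ref{f-cover}\ref{f-cover-iii} it is closed under addition, and by transitivity of $f$ it is nonempty, so it is a numerical semigroup. If $\gcd S(K)=1$, then $S(K)$ is cofinite, so $K$ $f^n$-covers itself for all large $n$; feeding a transitive point in the interior of $K$ into Lemma \ref{lem:trans-mix} then shows that $f$ is mixing, in particular weakly mixing, so that Theorem \ref{thm:wm-mix-ent} applies (and conversely, if one prefers to stop at weak mixing, the same cofinite family of self-covers makes $f\times f$ transitive).

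The main obstacle is therefore to establish $\gcd S(K)=1$, that is, to exclude a nontrivial common period $d>1$ among the self-covering times. The plan is to show that such a $d$ would force a regular periodic decomposition of $X$ of period $d$: the two sides of the cut point $c$ and their forward images would have to fall into $d$ residue classes that $f^d$ permutes without mixing, contradicting the transitivity of $f^d$ guaranteed by total transitivity. Turning this local statement about covering times into a genuine global decomposition is the delicate step, and it is precisely here that connectedness and the two-component cut structure of the disconnecting interval are indispensable; this is the general-space analogue of the elementary interval fact that a transitive but not totally transitive map of $[0,1]$ has a fixed point $c$ with $f([0,c])=[c,1]$ and $f([c,1])=[0,c]$, a period-two decomposition that total transitivity forbids.
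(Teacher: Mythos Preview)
Your route diverges from the paper's. The paper disposes of the theorem in three lines by importing two external facts: by \cite[Theorem 1.1]{AKLS} every transitive map of a connected space with a disconnecting interval has dense periodic points, and by \cite[Theorem 1.1]{Banks-ETDS} a totally transitive map with dense periodic points is weakly mixing; Theorem~\ref{thm:wm-mix-ent} then finishes. So the paper never touches covering semigroups or periodic decompositions here; all the work is outsourced.

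Your plan is more self-contained in spirit, but as written it has a genuine gap at exactly the place you flag. You reduce everything to $\gcd S(K)=1$ and propose to derive a length-$d$ regular periodic decomposition from $\gcd S(K)=d>1$, but you do not actually construct it: passing from ``the self-covering times of a single arc lie in $d\mathbb{Z}$'' to a global cover of $X$ by regular closed sets $D_0,\ldots,D_{d-1}$ with $f(D_i)\subseteq D_{i+1}$ and nowhere-dense pairwise intersections is precisely the nontrivial content, and nothing you have written forces the images $f^i(K)$ to organise themselves that way. The interval analogue you invoke relies on order structure on the whole space, which you do not have here. There is also a smaller slip earlier: transitivity alone gives $f^n(K)\cap K\neq\emptyset$, not that $K$ $f^n$-covers $K$, so even the nonemptiness of $S(K)$ needs an argument via connectedness of $f^n(K)$ and the cut-point structure (and even $f^n(K)\supseteq K$ only yields, via Lemma~\ref{f-cover}\ref{f-cover-v}, that $K$ $f^n$-covers one of its two halves). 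If you want to salvage this approach, the cleanest fix is the paper's: get dense periodic points first and invoke Banks, rather than trying to build the RPD by hand.
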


\begin{proof}
    By \cite[Theorem 1.1]{AKLS} each transitive map of $X$ has a dense set of periodic points. Every totally transitive map with a dense set of periodic points is weakly mixing, see \cite[Theorem 1.1]{Banks-ETDS}.  The result now follows from Theorem \ref{thm:wm-mix-ent}.
\end{proof}

\section{Transitivity and dense periodicity}
A point $x\in X$ is \emph{periodic} if there is $n>0$ such that $f^n(x)=x$. Let $\Per(f)$ denote the set of periodic points of $f$. 
We will need the following lemma stating that if periodic points for a transitive map are somewhere dense, then the set of periodic points is dense in $X$. The result is probably well-known, but we were unable to find a proof other than the proof of \cite[Lemma 8.5]{drwiega_phd}, so we provide our own.
\begin{lemma}\label{lem:reduction}
    If $f\colon X \to X$ is a transitive map and for a nonempty open set $W \subseteq X$ we have $W \subseteq \overline{W \cap \Per(f)}$, then $\overline{\Per(f)} = X$.
\end{lemma}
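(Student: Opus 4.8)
The plan is to prove the statement directly by showing that every nonempty open set $V\subseteq X$ contains a periodic point; this is exactly the assertion $\overline{\Per(f)}=X$. The two ingredients are the forward invariance of the periodic set and the local density supplied by the hypothesis.

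First I would record that $\Per(f)$ is forward invariant, that is, $f(\Per(f))\subseteq\Per(f)$, since the image of a point of period $n$ under $f$ is again a point of period $n$. Next, the hypothesis $W\subseteq\overline{W\cap\Per(f)}$ says precisely that $W\cap\Per(f)$ is dense in $W$; consequently every nonempty open subset of $W$ contains a periodic point.

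Now fix an arbitrary nonempty open set $V\subseteq X$. Applying transitivity to the pair $(W,V)$ yields an integer $k\ge 0$ with $f^k(W)\cap V\neq\emptyset$. By continuity of $f^k$ the set $W'=W\cap f^{-k}(V)$ is open, and it is nonempty because some $w\in W$ satisfies $f^k(w)\in V$. Since $W'$ is a nonempty open subset of $W$, the density noted above produces a periodic point $p\in W'$. Then $f^k(p)\in V$, and $f^k(p)$ is periodic by the forward invariance of $\Per(f)$; hence $V\cap\Per(f)\neq\emptyset$. As $V$ was an arbitrary nonempty open set, $\Per(f)$ is dense in $X$.

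There is no serious obstacle here: the argument is a single-step transfer of local density along the orbit structure. The only point demanding care is to pull $V$ back into $W$ (rather than push $W$ forward), so that the periodic point is located inside $W$, where density is guaranteed, and then to invoke forward invariance to certify that its image in $V$ is still periodic.
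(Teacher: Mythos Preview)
Your proof is correct and follows essentially the same route as the paper's: pull $V$ back into $W$ via transitivity to get the nonempty open set $W\cap f^{-k}(V)$, pick a periodic point there using the local density hypothesis, and push it forward by $f^k$ into $V$. The only difference is that you spell out the forward invariance of $\Per(f)$ explicitly, which the paper leaves implicit.
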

\begin{proof}
Fix a nonempty open set $V\subseteq X$. By transitivity, there is $k\ge 0$ such that $U=W\cap f^{-k}(V)\neq\emptyset$. Using our assumption, we find $x\in\Per(f)\cap U$. Hence $f^k(x)\in \Per(f)\cap V$, which ends the proof.    
\end{proof}
A map $f\colon X \to X$ is \emph{minimal} if $\Trans(f)=X$. It is known that a transitive map is either minimal, or the set $\Intrn(f)$ of points whose orbits are non-dense is dense in $X$. This is a result due to Kinoshita \cite{Kinoshita}, see \cite[Theorem 4.3.1]{KS} or \cite[Fact 2]{DY} for other proofs.
\begin{theorem}\label{thm:intrans}
    If $X$ has no isolated points and $f\colon X\to X$ is a map, then $\Intrn(f)$ is either empty or dense in $X$.
\end{theorem}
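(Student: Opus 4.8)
The plan is to prove the dichotomy by a contrapositive-style argument: I will show that if $\Intrn(f)$ is nonempty, then it must be dense. Suppose $x_0 \in \Intrn(f)$, so the orbit $\mathcal{O}_f(x_0)$ is not dense in $X$. As noted in the excerpt, for a transitive map the orbit of an intransitive point is actually nowhere dense, so $\overline{\mathcal{O}_f(x_0)}$ is a closed nowhere dense set. My goal is to produce, inside every nonempty open set $V \subseteq X$, a point whose forward orbit is non-dense.

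First I would exploit transitivity to move the ``defect'' of $x_0$ into an arbitrary open set $V$. Since $f$ is transitive and $X$ has no isolated points, $\Trans(f)$ is a dense $G_\delta$, so I can pick a transitive point $z$ whose forward orbit enters $V$; say $f^m(z) \in V$. The key idea is to find a point $w \in V$ that is a preimage (under some iterate) of $x_0$, or more robustly, a point in $V$ whose orbit is trapped in a nowhere dense set. Concretely, because $z$ is transitive its orbit is dense, so there is some $k$ with $f^k(z)$ close to $x_0$; I would rather work with the backward structure. A cleaner route: since $X$ has no isolated points, the nowhere dense closed set $C = \overline{\mathcal{O}_f(x_0)}$ has empty interior, and I want to pull a point of $\Intrn(f)$ back into $V$.

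The central mechanism I would use is that the set $\Intrn(f)$ is forward invariant in the following weak sense and, more importantly, is backward saturated: if $f(y) \in \Intrn(f)$ then $y \in \Intrn(f)$, because $\mathcal{O}_f(y) = \{y\} \cup \mathcal{O}_f(f(y))$ is dense if and only if $\mathcal{O}_f(f(y))$ is dense (adding one point cannot make a non-dense set dense in a space with no isolated points). Hence $f^{-1}(\Intrn(f)) \subseteq \Intrn(f)$, so $\Intrn(f)$ contains all preimages of $x_0$ under every iterate. Now I use transitivity in its preimage form: for the given open set $V$ and any open set $U$ meeting $\Intrn(f)$ (for instance a small ball around $x_0$), transitivity gives some $k \ge 0$ with $f^{-k}(U) \cap V \neq \emptyset$ (equivalently $f^k(V) \cap U \neq \emptyset$). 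Choosing $U$ to be a neighbourhood of $x_0$ that still lies in $\Intrn(f)$ is the subtle point; instead I take a genuine preimage: pick $w \in V$ with $f^k(w)$ equal to a point of $\Intrn(f)$. By backward saturation $w \in \Intrn(f) \cap V$, so $V$ meets $\Intrn(f)$, and since $V$ was arbitrary, $\Intrn(f)$ is dense.

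The main obstacle is making the preimage selection honest: transitivity only guarantees $f^k(V) \cap U \neq \emptyset$, which yields a point $w \in V$ with $f^k(w) \in U$, not necessarily $f^k(w) \in \Intrn(f)$ itself. To fix this I would take $U$ to be the complement of the closed nowhere dense invariant-orbit-closure of \emph{some} intransitive point together with a careful choice ensuring $f^k(w)$ lands in $\Intrn(f)$; alternatively, and more cleanly, I would argue that the countable union $\bigcup_{k \ge 0} f^{-k}(\mathcal{O}_f(x_0))$ is contained in $\Intrn(f)$ and is dense. For density of this union one invokes transitivity: its closure is forward invariant and contains the dense orbit-meeting structure, so by the standard transitivity argument (a closed, nonempty, fully $f$-compatible set meeting every open set must be all of $X$) it is dense. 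This last density claim, bridging the abstract backward-saturation with genuine topological density while respecting the ``no isolated points'' hypothesis, is where I expect the real work to lie, and it is exactly the content attributed to Kinoshita.
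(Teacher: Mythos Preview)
The paper does not actually prove Theorem~\ref{thm:intrans}; it is quoted as a known result due to Kinoshita, with references to \cite{Kinoshita}, \cite[Fact 2]{DY}, and \cite[Theorem 4.3.1]{KS}. So there is no in-paper proof to compare against, and the question reduces to whether your argument is correct.

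It is not. You correctly identify the obstacle: transitivity only gives a point $w\in V$ with $f^k(w)$ \emph{near} $x_0$, not equal to a point of $\Intrn(f)$. Your proposed workaround, showing that $\bigcup_{k\ge 0} f^{-k}(\mathcal{O}_f(x_0))$ is dense, fails in general. Take any transitive \emph{homeomorphism} with a periodic point, for instance a hyperbolic toral automorphism or the two-sided full shift, and let $x_0$ be a fixed point. Then $f^{-k}(\mathcal{O}_f(x_0))=\{x_0\}$ for every $k$, so the union is the single point $\{x_0\}$, certainly not dense. Your closing sentence is circular: you assert the closure ``meets every open set'' in order to conclude density, which is exactly what has to be shown. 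Backward saturation of $\Intrn(f)$ is true and useful, but it cannot by itself manufacture intransitive points inside an arbitrary open $V$.

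What the cited proofs actually do is different in spirit: one fixes a proper closed invariant set $M\subsetneq X$ (for example a minimal subset, which exists since $f$ is non-minimal) and an open set $U$ with $\overline{U}\cap M=\emptyset$, and then, given any nonempty open $V$, one \emph{constructs} a point $y\in V$ whose forward orbit misses $U$. This is done by a nested-intersection argument using that orbits starting sufficiently close to $M$ shadow $M$ (and hence avoid $U$) for arbitrarily long stretches, together with transitivity to steer into such neighbourhoods. The resulting $y$ has $\mathcal{O}_f(y)\cap U=\emptyset$, hence $y\in\Intrn(f)\cap V$. The construction, not a soft invariance argument, is the real content of Kinoshita's theorem.
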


The main purpose of this section is to provide a direct proof of the following theorem:

\begin{theorem}\label{thm:trans-min=dp}
    If $f\colon X \to X$ is a transitive but not minimal map and $X$ has a free interval, then $\Per(f)$ is dense in $X$.
\end{theorem}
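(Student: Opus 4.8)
The plan is to reduce the global statement to a purely local one on the free interval, and then to manufacture periodic points inside every subinterval by producing free subarcs that cover themselves under some iterate.

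First I would record that $X$ has no isolated points: a free interval embeds a copy of $(0,1)$, so $X$ is infinite, and a transitive map of an infinite compact metric space has no isolated points. Hence Theorem \ref{thm:intrans} applies, and since $f$ is not minimal we have $\Trans(f)\ne X$, so $\Intrn(f)$ is nonempty and therefore dense. Next, by Lemma \ref{lem:reduction} it suffices to exhibit a single nonempty open set in which periodic points are dense. I would fix a free arc $\hat I\subseteq J$ (a free interval always contains one) and prove the local claim that every nonempty open subinterval $U\subseteq\interior\hat I$ contains a periodic point. Since the periodic points so produced lie in $U\subseteq\hat I$, this gives $\interior\hat I\subseteq\overline{\interior\hat I\cap\Per(f)}$, and Lemma \ref{lem:reduction} applied with $W=\interior\hat I$ then yields $\overline{\Per(f)}=X$.

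The heart of the argument is that local claim: for every free subarc $U\subseteq\interior\hat I$ there are a free arc $A\subseteq U$ and an integer $n\ge 1$ with $A$ $f^n$-covering $A$; Lemma \ref{f-cover}\ref{f-cover-i} applied to $f^n$ then supplies a fixed point of $f^n$ in $A$, i.e. a periodic point in $U$. To build such a self-covering arc I fix a transitive point $p\in U$ and parametrise $J$ by a homeomorphism $\varphi$ so that the orbit of $p$, being dense in $J$, both returns to $U$ and comes arbitrarily close to each end of $U$. For a return time $n$ I look at the maximal subinterval $A\ni p$ on which $f^n$ stays inside the free interval; as the parameter runs to either endpoint of $A$, the image under $f^n$ must approach $\partial J$, so the ``position along $J$'' function $\varphi^{-1}\circ f^n\circ\varphi$ tends to an endpoint of the parameter interval at each side of $A$. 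Combining this boundary behaviour with the intermediate value theorem packaged in Lemma \ref{f-cover}\ref{f-cover-iv}--\ref{f-cover-v}, and using the orbit of $p$ to force the image of some lap to stretch across $U$, I would locate three suitably separated points of $J$ inside a single image $f^n(A)$ and hence obtain the required self-covering. Non-minimality enters precisely here: it is what prevents $f$ from acting on $U$ as a rigid injective first-return map (the minimal, rotation-like alternative, which has no self-covering laps and no periodic points), guaranteeing a genuine fold that produces the covering.

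The main obstacle is exactly this construction. Transitivity is a statement about one orbit meeting open sets over time, whereas a self-covering arc is a statement about the image of one arc under one iterate; bridging the two requires careful control of the excursions of $f^n(A)$ out of and back into $J$, which is what the free-arc covering lemmas \ref{f-cover}\ref{f-cover-iv}--\ref{f-cover-v} are designed to handle, and it requires extracting from non-minimality (via the density of $\Intrn(f)$) the folding that converts ``stretching across $U$'' into an honest self-cover. Once a self-covering arc has been found in an arbitrary $U$, the covering algebra of Lemma \ref{f-cover}\ref{f-cover-ii}--\ref{f-cover-iii} together with Lemma \ref{lem:reduction} propagates periodic points to a dense subset of $X$.
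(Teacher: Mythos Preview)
Your reductions are sound: Theorem~\ref{thm:intrans} gives $\Intrn(f)$ dense, and by Lemma~\ref{lem:reduction} it suffices to find a periodic point in every open subarc $U$ of a fixed free arc, which would follow once some $A\subseteq U$ $f^n$-covers itself. The gap is that you never actually carry out this construction, and the sketch you offer does not work as stated. For a return time $n$ of a transitive point $p\in U$, the maximal connected set $A\ni p$ on which $f^n$ takes values in $J$ need not lie in $U$, so a self-cover of $A$ would not place a periodic point in $U$; and your claim that $\varphi^{-1}\circ f^n\circ\varphi$ tends to an endpoint of the parameter interval at $\partial A$ presupposes that $\partial J\subseteq X$ corresponds to the parameter endpoints, which fails in general (the closure of a free interval need not be an arc, as in the $\sin\frac1x$ continuum). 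Most importantly, you never convert the density of $\Intrn(f)$ into anything concrete: the sentence ``non-minimality prevents $f$ from acting on $U$ as a rigid first-return map'' is a restatement of what must be proved, not a mechanism for producing the fold you need.

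The idea you are missing is a quotient. Pick $p,q\in J\cap\Intrn(f)$, set $S=\overline{\mathcal O_f(p)}\cup\overline{\mathcal O_f(q)}$ (closed, $f$-invariant, nowhere dense), and collapse $S$ to a point. In the quotient this point is \emph{fixed} by the induced map, and each component $I$ of $J\setminus S$ is a free interval both of whose ends are that fixed point. Now the problem is genuinely one-dimensional on $I$, and one treats $(a,b)\subseteq I$ by cases on whether $(a,b)$ meets $\bigcup_{k\ge1}f^{-k}(S)$. If not, then iterates of transitive points starting in $(a,b)$ cannot leave $(a,b)$ without sweeping across it, and a return argument in the style of \cite{AKLS} yields a self-covering arc. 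If so, some $q\in(a,b)$ is eventually fixed in the quotient, and a Silverman-type graph argument produces two subarcs of $(a,b)$ that mutually cover under suitable iterates, hence one self-covers. The quotient is precisely what turns ``intransitive points exist'' into the barrier/fixed-point structure your outline gestures at but does not build.
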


\begin{proof}
    Let $J$ denote the free interval in $X$ and $\varphi\colon (0,1) \to J$ be a homeomorphism. 
    Since $f$ is transitive and $X$ is infinite, $X$ has no isolated points (it is easy to see directly; cf. the discussion after \cite[Theorem 2.2.1]{KS}). As $f$ is non minimal; by Theorem \ref{thm:intrans}, there exist 
    $p, q \in J\cap\Intrn(f)$ with $p\neq q$.  

    Define $S := \overline{\mathcal{O}_f(p)} \cup \overline{\mathcal{O}_f(q)}$. Since $p, q \in J\cap\Intrn(f)$ we have
    $\overline{\mathcal{O}_f(p)} \neq X$ and $\overline{\mathcal{O}_f(q)}\neq X$. As the closure of an orbit is always an invariant set and $f$ is transitive, the sets $\overline{\mathcal{O}_f(p)}$ and $\overline{\mathcal{O}_f(q)}$ must be nowhere dense. Therefore, $S$  
    is a closed $f$-invariant set, which must be nowhere dense by the Baire Category Theorem. We consider the compact quotient space $Y = X/S$ with the induced map $\tilde{f}\colon Y \to Y$. Note that it is enough to show that  $\Per(\tilde{f})$ is dense in $Y$. By a slight abuse of notation, we still write $f$ to mean $\tilde{f}$. We choose an open interval $I$ which is a connected component of $J\setminus S$. To simplify notation, we identify $I$ with $(0,1)$ using $\varphi$. We also write $\preceq$ for the orientation induced via $\varphi$ on $I$ by the usual order $\le$ on $(0,1)$. As usual, $x\prec y$ means for $x,y\in I$ that $x\preceq y$ and $x\neq y$. By Lemma \ref{lem:reduction} it is enough to show that for any $0 < a < b < 1$ there is a periodic point of $f$ in $(a,b)$. 
    
    Fix some $(a,b) \subseteq I$. Recall that in $Y$ the set $S$ is identified with a single point, which is fixed by $f$. Let
    $$S' := \bigcup_{k=1}^{\infty} f^{-k}(S).$$
    There are two disjoint cases to consider depending on whether $(a,b)$ has empty intersection with $S'$. 
    
    \textit{Case I:} $(a,b)\cap S'=\emptyset$.

    The proof in this case is very similar to the proof of \cite{AKLS} (see also \cite[Lemma 2.14]{Ruette}). Choose $x, y \in \Trans(f)$ with $a\prec x \prec y\prec b$. Find $n,m > 0$  such that $a \prec f^n(x) \prec x$ and $y \prec f^m(y) \prec b$. 
    We claim that $(a,b)\cap S'=\emptyset$ implies $a \prec f^{nj}(x),f^{mj}(y) \prec b$ for every $j\ge 1$. To prove our claim, assume first $f^{nj}(x)\notin (a,b)$ for some $j\ge 2$. Take $M=\min\{j\ge 2: f^{nj}(x)\notin (a,b)\}$. Let $\langle f^{(M-2)n}(x),f^{(M-1)n}(x)\rangle$ stand for the smallest subinterval of $(a,b)$ containing $f^{(M-2)n}(x)$ and $f^{(M-1)n}(x)$. We see that $S \in f^n(\langle f^{(M-2)n}(x),f^{(M-1)n}(x)\rangle)$ contradicting $(a,b)\cap S'=\emptyset$. Similarly, we prove $a \prec f^{mj}(y) \prec b$ for $j\ge 0$. 
    
    If there exists $k > 0$ such that $a\prec f^{kn}(x) \prec x\prec f^{(k+1)n}(x)$, then 
    $$f^{kn}\left([f^n(x),x]\right) \supseteq [f^n(x),x],$$
    so $f^{kn}$ has a fixed point in $[f^n(x),x] \subseteq (a,b)$. An analogous argument works if there exists $l > 0$ such that $f^{lm}(y) \prec y$.

    Suppose that $a\prec f^{kn}(x) \prec x$ and $y \prec f^{lm}(y)\prec b$ for all $k, l > 0$. Then, choosing $k = m$ and $l = n$, we get
    $$f^{mn}\left([x,y])\right) \supseteq [x,y].$$
    Again, this means that $f^{mn}$ has a fixed point in $[x,y] \subseteq (a,b)$. This finishes the argument in Case I.

    \textit{Case II:} $(a,b)\cap S'\neq\emptyset$. The proof in this case 
    is similar to the proof of \cite[Lemma 6.1]{Silverman}. Let $q \in (a,b)$ and $k \geqslant 1$ be such that $f^k(q) = S$. Notice that then $f^j(q) = S$ for $j \geqslant k$. Choose 
    $x,y \in (a,b)\cap \Trans(f)$ with $x\neq y$. Take $m, n > k$ such that $x \prec f^n(x) \prec q$ and $q \prec f^m(y) \prec y$. Let $\Gamma_n \subseteq (a,b) \times X\subseteq X^2$, respectively $\Gamma_m \subseteq (a,b) \times X\subseteq X^2$ denote the graph of $f^n|_{(a,b)}$, respectively of $f^m|_{(a,b)}$. Let $\Delta \subseteq X \times X$ denote the diagonal, that is $\Delta=
\{(x,x)\in X^2:x\in X\}$. If $\Gamma_n \cap \Delta \neq \emptyset$ or $\Gamma_m \cap \Delta \neq \emptyset$, then there is a periodic point in $(a,b)$. Therefore, from now on, we assume that $\Gamma_n \cap \Delta = \emptyset$ and $\Gamma_m \cap \Delta = \emptyset$.

    Since 
    $\Gamma_n \cap \Delta = \emptyset$, 
    the intermediate value property allows us to define
    $$s := \max\{z \in (x,q): f^n(z) = y\}.$$
    By the same argument applied to $\Gamma_m$, we define
    $$t := \min\{z \in (q,y): f^m(z) = x\}.$$
    Finally, let
    $$u := \max\{z \in (x,s): f^n(z) = t\}.$$
    By definition, $u \prec s$.
    \begin{figure}
    \centering
    \begin{tikzpicture}
        \draw[thick,-] (0,0) -- (6,0) -- (6,6) -- (0,6) -- (0,0);
        \draw[dashed,gray,-] (0,0) -- (6,6);
        \draw[dashed,gray,-] (3,0) -- (3,3) -- (0,3);
        \draw[dashed,gray,-] (0.8,0) -- (0.8,1.5) -- (0,1.5);
        \draw[dashed,gray,-] (5.2,4.6) -- (0,4.6);
        \draw[dashed,gray,-] (5.2,0) -- (5.2,5.2) -- (0,5.2);
        \draw[dashed,gray,-] (0,0.8) -- (3.52,0.8);
        \draw[dashed,gray,-] (3.52,0) -- (3.52,3.52) -- (0,3.52);
        \draw[dashed,gray,-] (2.1,5.2) -- (2.1,0);
        \draw[dashed,gray,-] (1.6,3.52) -- (1.6,0);
        \draw (0,0) node[anchor=north east] {$S$};
        \draw (6,0) node[anchor=north] {$S$};
        \draw (0,6) node[anchor=east] {$S$};
        \draw (2pt,0.5) -- (-2pt,0.5) node[anchor=east] {$a$};
        \draw (2pt,5.5) -- (-2pt,5.5) node[anchor=east] {$b$};
        \draw (2pt,3) -- (-2pt,3) node[anchor=east] {$q$};
        \draw (2pt,1.5) -- (-2pt,1.5) node[anchor=east] {$f^n(x)$};
        \draw (2pt,4.6) -- (-2pt,4.6) node[anchor=east] {$f^m(y)$};
        \draw (2pt,0.8) -- (-2pt,0.8) node[anchor=east] {$x$};
        \draw (2pt,5.2) -- (-2pt,5.2) node[anchor=east] {$y$};
        \draw (2pt,3.52) -- (-2pt,3.52) node[anchor=east] {$t$};
        \draw (0.5,2pt) -- (0.5,-2pt) node[anchor=north] {$a$};
        \draw (5.5,2pt) -- (5.5,-2pt) node[anchor=north] {$b$};
        \draw (3,2pt) -- (3,-2pt) node[anchor=north] {$q$};
        \draw (0.8,2pt) -- (0.8,-2pt) node[anchor=north] {$x$};
        \draw (5.2,2pt) -- (5.2,-2pt) node[anchor=north] {$y$};
        \draw (3.52,2pt) -- (3.52,-2pt) node[anchor=north] {$t$};
        \draw (2.1,2pt) -- (2.1,-2pt) node[anchor=north] {$s$};
        \draw (1.6,2pt) -- (1.6,-2pt) node[anchor=north] {$u$};
        \fill (0.8,1.5) circle[radius=1.5pt];
        \fill (2.1,5.2) circle[radius=1.5pt];
        \draw (0.8,1.5) .. controls (1.5,1) and (1.5,6) .. (3,6);
        \draw (1.1,4.3) node[anchor=north west] {$f^n$};
        \fill (1.6,3.52) circle[radius=1.5pt];
        \fill (5.2,4.6) circle[radius=1.5pt];
        \fill (3.52,0.8) circle[radius=1.5pt];
        \draw (5.2,4.6) .. controls (4.5,4) and (4.5,2) .. (3,0);
        \draw (4,2.3) node[anchor=north west] {$f^m$};
    \end{tikzpicture}
    \caption{Example illustration of the graphs $\Gamma_n$ and $\Gamma_m$, and points $s$, $t$, $u$ used in the argument.} \label{fig1}
    \end{figure}
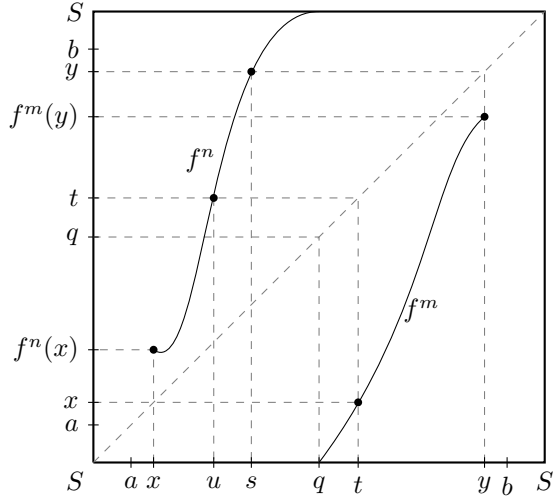

    We have the following inclusions
    $$f^n([u,s]) \supseteq [t,y],$$
    $$f^m([t,y]) \supseteq [x,q] \supseteq [u,s].$$
    This gives $f^{n+m}([u,s]) \supseteq [u,s]$ and so $f$ has a periodic point in $[u,s] \subseteq (a,b)$ by Lemma \ref{f-cover}\ref{f-cover-i}. 
\end{proof}

A set $D \subset X$ is \emph{regular closed} if it is equal to the closure of its interior. A \emph{regular periodic decomposition} (RPD) for a map $f\colon X\to X$ is a finite sequence $\mathcal{D} = (D_0,\ldots,D_{k-1})$ of nonempty regular closed sets that cover $X$ and satisfy $f(D_i) \subset D_{i+1\ \text{mod}\ k}$ for all $0 \leqslant i < k$ and $D_i \cap D_j$ is nowhere dense for $i \neq j$. We say that $k$ is the \emph{length} of a RPD $\mathcal{D} = (D_0,\ldots,D_{k-1})$. We call a RPD \emph{terminal} if it is of maximal length among all RPDs for $f$. By \cite[Theorem 3.1]{Banks-ETDS} a RPD is terminal if and only if $f^k|_{D_i}$ is totally transitive for $0 \leqslant i < k$. We recall the following criterion for the existence of terminal RPD when the space has a free interval, see \cite[Lemma 7]{DSS}. 

\begin{lemma}\label{lem:terminal_RPD}
If $X$ has a free interval $J$ and $f\colon X\to X$ is transitive map with $\Per(f)\cap J\neq\emptyset$, then $f$ has a terminal RPD.
\end{lemma}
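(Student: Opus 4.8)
The plan is to bound the lengths of all RPDs of $f$. The constant sequence $(X)$ is always an RPD (the regular-closedness and covering conditions are trivial and the nowhere-dense condition is vacuous), so the set of admissible lengths is a nonempty set of positive integers; once it is bounded, any RPD of maximal length is terminal by \cite[Theorem 3.1]{Banks-ETDS}. I begin with a reduction used throughout. Since $X$ has a free interval it is infinite and perfect, so a minimal map would have no periodic points; because $\Per(f)\cap J\neq\emptyset$, the map $f$ is not minimal, and Theorem~\ref{thm:trans-min=dp} gives that $\Per(f)$ is dense in $X$.

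The heart of the argument is an address calculus for a fixed RPD $\mathcal{D}=(D_0,\dots,D_{k-1})$. Because each $D_i$ is regular closed and the pairwise intersections are nowhere dense, the interiors $U_i=\operatorname{int}D_i$ are pairwise disjoint and $\bigcup_i U_i$ is open and dense. For $z\in X$ set $S_n(z)=\{i\in\mathbb{Z}/k:\ f^n(z)\in D_i\}$, which is nonempty since the $D_i$ cover $X$. The relation $f(D_i)\subseteq D_{i+1}$ yields $S_{n+1}(z)\supseteq S_n(z)+1$ for all $n$. If $y\in\Per(f)$ has period $q$, then $S_q(y)=S_0(y)$, and iterating the inclusion forces $S_0(y)\supseteq S_0(y)+q$; being finite subsets of equal cardinality, $S_0(y)+q=S_0(y)$, so $S_0(y)$ is invariant under translation by $q$ in $\mathbb{Z}/k$. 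Since the orbits of $+q$ all have size $k/\gcd(k,q)$ and $S_0(y)$ is a nonempty union of such orbits, we obtain $k/\gcd(k,q)\le|S_0(y)|$. In the special case where $y$ lies in a single piece, i.e. $y\in U_i$ so $S_0(y)=\{i\}$, this collapses to $k\mid q$.

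To obtain a bound that is \emph{uniform} over all RPDs I plan to apply this calculus to the given periodic point $x_0\in\Per(f)\cap J$, of some fixed period $p$. The displayed inequality gives $k/\gcd(k,p)\le|S_0(x_0)|$, so a bound on $|S_0(x_0)|$ independent of $k$ immediately bounds $k$: if $|S_0(x_0)|\le 2$ then $k\le 2\gcd(k,p)\le 2p$ for every RPD, and the lengths are bounded as required. Here is where the free interval enters decisively: since $x_0\in J$ it has a neighbourhood that is a free arc $A$, open in $X$, so the pieces meeting $x_0$ are exactly those $U_i$ approaching $x_0$ along $A$, and $S_0(x_0)$ is governed by the one-dimensional order on $A$. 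Generically $x_0$ sees only the piece on its left and the piece on its right, giving $|S_0(x_0)|\le 2$ and hence $k\le 2p$.

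The main obstacle, which I expect to be the hardest step, is precisely this local-finiteness statement: one must rule out that infinitely many components of $\bigcup_i U_i$ accumulate at $x_0$ along $A$, as an accumulation could in principle make $|S_0(x_0)|$ as large as $k$ and destroy the uniform bound. I would establish it from the regularity of the decomposition together with the interval structure, using that $f^{p}$ fixes $x_0$: via the intermediate value property underlying Lemma~\ref{f-cover}\ref{f-cover-i} one extracts a nondegenerate sub-arc $K\ni x_0$ with $f^{p}(K)\supseteq K$, and then analyses how $f^{p}$ permutes, by the shift $+p$, the labels of the finitely many components meeting $K$; the cyclic relation $S_0(x_0)+p=S_0(x_0)$ applied to this controlled finite pattern should forbid the accumulation and pin $|S_0(x_0)|\le 2$. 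Conceptually this is the elementary, interval-based shadow of the fact that unbounded RPD lengths would force an infinite odometer factor of $(X,f)$, which has no periodic points; the free interval replaces that structure theory by the one-dimensional order argument. Once $|S_0(x_0)|\le 2$ is in hand, the bound $k\le 2p$ holds for every RPD and a terminal RPD exists.
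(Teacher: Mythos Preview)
The paper does not prove this lemma itself; it simply quotes it as \cite[Lemma 7]{DSS}. So there is no in-paper argument to compare against, and the relevant question is whether your outline stands on its own.

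Your overall strategy---bound the length $k$ of every RPD by a constant depending only on the period $p$ of the given periodic point $x_0\in J$, then invoke \cite[Theorem 3.1]{Banks-ETDS}---is the natural one, and your address calculus leading to $k/\gcd(k,p)\le |S_0(x_0)|$ is correct. The gap is exactly where you flag it: the inequality $|S_0(x_0)|\le 2$. Your sketch does not close it. First, the existence of a nondegenerate arc $K\ni x_0$ with $f^{p}(K)\supseteq K$ is not a consequence of Lemma~\ref{f-cover}\ref{f-cover-i}; that lemma goes the other way, from a covering to a fixed point. More seriously, there is no reason $f^{p}$ must send a small arc around $x_0$ back into the free interval at all, so intermediate-value reasoning on $J$ is simply unavailable. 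Second, even granting such a $K$, you have not explained why only finitely many components of $\bigcup_i U_i$ meet $K$: the relation $S_0(x_0)+p=S_0(x_0)$ constrains which \emph{labels} occur at $x_0$, but says nothing about how the components carrying those labels are interlaced along the arc. Nothing you have written excludes three (or more) of the $U_i$ accumulating at $x_0$ from one side, and in that case your bound $k\le 2p$ collapses. As it stands, the argument proves only $k\le p\cdot|S_0(x_0)|$ with no uniform control on the second factor.

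A minor remark: your opening reduction to dense periodic points via Theorem~\ref{thm:trans-min=dp} is harmless but is never used afterwards; the single point $x_0$ carries the whole argument you actually wrote.
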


We say that $f$ is \emph{relatively mixing} if there is a RPD $\mathcal{D} = (D_0,\ldots,D_{k-1})$ such that $f^k|_{D_i}$ is mixing for each $0 \leqslant i < k$. We are ready to provide a new proof of \cite[Theorem C(1)]{DSS}. Recall that \cite[Theorem C]{DSS} describes a dichotomy for transitive maps on spaces containing a free interval; the first part, referred to as \cite[Theorem C(1)]{DSS} (presented below), describes non-minimal maps, while the second alternative, \cite[Theorem C(2)]{DSS}, describes minimal maps of such spaces.
\begin{theorem}\label{thm:DSS-C(1)}
If $X$ has a free interval and $f\colon X \to X$ is a transitive non-minimal map, then $f$ is relatively mixing, non-invertible, has positive topological entropy, and dense periodic points.
\end{theorem}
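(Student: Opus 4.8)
The plan is to assemble Theorem \ref{thm:DSS-C(1)} from the two main results already proved, namely Theorem \ref{thm:trans-min=dp} and Theorem \ref{thm:wm-mix-ent}, together with the structural Lemma \ref{lem:terminal_RPD}. The overall strategy is: first obtain dense periodic points directly, then use them to produce a terminal RPD, then upgrade the pieces of that decomposition from total transitivity to mixing, and finally read off non-invertibility and positive entropy as consequences.

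\textbf{Step 1 (dense periodic points).} Since $f$ is transitive but not minimal and $X$ has a free interval, Theorem \ref{thm:trans-min=dp} immediately gives that $\Per(f)$ is dense in $X$. This already establishes one of the four assertions, and more importantly it guarantees $\Per(f)\cap J\neq\emptyset$ for the free interval $J$, which is exactly the hypothesis needed for the next step.

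\textbf{Step 2 (terminal RPD and relative mixing).} Because $f$ is transitive and $\Per(f)\cap J\neq\emptyset$, Lemma \ref{lem:terminal_RPD} furnishes a terminal RPD $\mathcal{D}=(D_0,\dots,D_{k-1})$. By the characterization of terminal RPDs recalled from \cite[Theorem 3.1]{Banks-ETDS}, the restriction $f^k|_{D_i}$ is totally transitive for each $i$. I would then argue that each $D_i$ still contains a free interval of $X$: the sets $D_i$ are regular closed and cover $X$, their pairwise intersections are nowhere dense, so the open free interval $J$ cannot be covered by the nowhere-dense overlaps, and hence some subinterval of $J$ lies in the interior of a single $D_i$; iterating $f$ and using that the $D_i$ are permuted cyclically spreads a free interval into every $D_i$. (This is the point I expect to require the most care — see below.) A totally transitive map with dense periodic points is weakly mixing by \cite[Theorem 1.1]{Banks-ETDS}, and $f^k|_{D_i}$ inherits dense periodic points from the density of $\Per(f)$ in $X$. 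Applying Theorem \ref{thm:wm-mix-ent} on the space $D_i$ (which has a free interval) shows that $f^k|_{D_i}$ is mixing for every $i$; this is precisely the statement that $f$ is relatively mixing.

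\textbf{Step 3 (positive entropy and non-invertibility).} Positive topological entropy follows because the horseshoe constructed in the proof of Theorem \ref{thm:wm-mix-ent}, when applied to the free interval inside some $D_i$, already forces $h(f^k|_{D_i})>0$, and entropy of a power scales as $h(f^k)=k\,h(f)$, so $h(f)>0$. For non-invertibility I would observe that a mixing (hence positive-entropy, hence non-equicontinuous) map cannot be a homeomorphism here: if $f$ were invertible it would be a homeomorphism of the compact space $X$, but a homeomorphism has zero topological entropy on a one-dimensional free arc obstruction, or more directly, the horseshoe exhibits two disjoint free arcs with overlapping images, which is incompatible with injectivity.

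\textbf{Main obstacle.} The delicate point is Step 2: verifying that each block $D_i$ of the terminal RPD genuinely contains a free interval of $X$, so that Theorem \ref{thm:wm-mix-ent} is applicable on $D_i$ rather than merely on $X$. One must check that a free interval, being open and one-dimensional, survives passage to a single regular-closed block (using that overlaps are nowhere dense) and is then transported by the cyclic action of $f$ into all other blocks; getting the topology of the free arc to behave correctly inside the subspace $D_i$, and confirming that $f^k|_{D_i}$ meets the hypotheses of Theorem \ref{thm:wm-mix-ent} as a self-map of the compact space $D_i$, is where the real work lies.
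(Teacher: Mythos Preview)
Your overall architecture matches the paper's exactly: dense periodicity via Theorem~\ref{thm:trans-min=dp}, terminal RPD via Lemma~\ref{lem:terminal_RPD}, then upgrade total transitivity plus dense periodic points to weak mixing via \cite[Theorem 1.1]{Banks-ETDS}, and finally invoke Theorem~\ref{thm:wm-mix-ent}.

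The one substantive difference is your Step~2, and here you have made life harder than necessary. You try to show that \emph{every} $D_i$ contains a free interval, by ``spreading'' a free subinterval of $J$ via the cyclic action of $f$. That argument is not valid: the continuous image $f(J')$ of a free interval need not be an arc, and even if it were, it need not be open in $X$ (or in $D_{i+1}$). So the claimed propagation of free intervals fails in general, and this is precisely the obstacle you flagged.

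The paper sidesteps this entirely. It only observes that \emph{at least one} $D_i$---say $D_0$---contains a free interval (immediate from the fact that the open set $J$ meets the interior of some regular closed $D_i$), and applies Theorem~\ref{thm:wm-mix-ent} to $f^k|_{D_0}$ alone. Mixing then passes to all other blocks for free: $f|_{D_i}\colon D_i\to D_{i+1}$ is a continuous surjection satisfying $f\circ (f^k|_{D_i}) = (f^k|_{D_{i+1}})\circ f$, so each $f^k|_{D_{i+1}}$ is a factor of $f^k|_{D_i}$, and mixing is preserved under factor maps. Thus mixing of $f^k|_{D_0}$ forces mixing of every $f^k|_{D_i}$. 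This removes your ``main obstacle'' completely.

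For non-invertibility, your horseshoe observation is in fact the cleanest route: the horseshoe produced inside $D_0$ by Theorem~\ref{thm:wm-mix-ent} gives two disjoint free arcs $K_0,K_1$ with $f^{2k}(K_0)\supseteq K_0\cup K_1$, which is impossible for an injective $f$. The paper's displayed proof does not spell this out.
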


\begin{proof} By Theorem \ref{thm:trans-min=dp} we have $\Per(f)$ is dense in $X$.
    By Lemma \ref{lem:terminal_RPD} there exists a terminal RPD for $f$, say $\mathcal{D} = (D_0,\ldots,D_{k-1})$. Thus $f^k|_{D_i}$ is totally transitive for each $0 \leqslant i < k$. Then 
     $f^k|_{D_i}$ for each $0\leqslant i <k$ is a totally transitive map that has dense periodic points, so by \cite[Theorem 1.1]{Banks-ETDS} $f^k|_{D_i}$ is weakly mixing. Finally, at least one $D_i$ contains a free interval, let it be $D_0$. Applying Theorem \ref{thm:wm-mix-ent} to $f^k|_{D_0}$, we see that $f^k|_{D_i}$ is mixing and has positive topological entropy.
\end{proof}
 
It is natural to ask if the results presented here can be generalised even further.

\begin{problem}
For which continua do the appropriate analogs of Theorems \ref{thm:wm-mix-ent} and \ref{thm:tt-mix} hold?    
\end{problem}  

Theorem \ref{thm:wm-mix-ent} is false for general dendrite maps. It follows from the existence of a zero entropy, weakly mixing, non-mixing, and proximal dendrite map. Recall that proximality of a map is equivalent to the fact that the only periodic point of that map is a fixed point which is also the only point that is a minimal point for the system. Such a map of the Ważewski dendrite was constructed in \cite{HM} and proved to have zero entropy in \cite{BFK}. 

Benjamin Vejnar asked whether the assumption of the existence of a disconnecting interval in $X$ in Theorem \ref{thm:tt-mix} could be relaxed to: $X$ is a compact and connected space with uncountably many cut-points (i.e. points whose removal disconnects $X$). The answer is negative already for dendrites. It is known that every point in the dendrite which is not an endpoint is a cut-point. Therefore, the examples from \cite{HM} discussed above show that assuming the existence of uncountably many cut-points cannot replace the existence of a free arc in $X$ in Theorem \ref{thm:tt-mix}.

Nevertheless, it is possible that every weakly mixing map with a dense set of periodic points on a space with uncountably many cut points is mixing (that is, \cite[Theorem 3.2]{HKO-JDEA} generalises to spaces with uncountably many cut points).

Another intriguing question concerns the specification property introduced by R.~Bowen \cite{Bowen}, see also \cite{KLO}. Specification is a kind of stronger, uniform variant of mixing. A.~M.~Blokh proved that an interval map has the specification property if and only if it is mixing and later extended this result to maps of topological graphs, see \cite{Blokh-I,Blokh-II,Blokh-III}. For other proofs, see \cite{AdRR,Buzzi,HKO-ETDS}.

Therefore, a natural problem is how far this result can be generalised.

\begin{problem}
For which continua does every mixing map have the specification property?  Does every mixing map of a space with a free arc have the specification property?  
\end{problem} 

C.~Mouron (personal communication) informed us that the answer is negative for dendrites, but to the best of our knowledge, the problem seems to be open even for the simplest continua with a free interval, like the Warsaw circle.

In \cite{Weiss} there is an example of a Cantor set homeomorphism with a dense set of periodic points and zero entropy, whose only ergodic invariant measures are concentrated on periodic orbits (see \cite{FKL} for more details). This motivates our last problem. We conjecture that the answer to the first half of the question is positive.

\begin{problem}
Does there exist a mixing dendrite map with a dense set of periodic points and zero entropy? Can the only ergodic invariant measures of such a map be the ones concentrated on periodic orbits?   
\end{problem} 

\section*{Acknowledgements}

We would like to thank Ľubom\'{\i}r Snoha for 
a discussion, encouragement, and insightful remarks about the article. 
The publication has been supported by the Flagship Project CENMATRE under the Strategic Programme Excellence Initiative at Jagiellonian University.
We thank the two anonymous referees for their careful reviews and numerous helpful comments and corrections, which helped improve the manuscript. We are particularly grateful for the discussion concerning the role of isolated points and dense orbits in transitive systems, which helped us better understand these issues.

\printbibliography

\end{document}